\renewcommand{\r}{\mathbb R}
\newcommand{\liz}{\mathcal L}
\newcommand{\qpn}{\mathbb Q_{p}^n}
\newcommand{\qp}{\mathbb Q_{p}}
\newcommand{\dd}{\mathcal D(\mathbb Q_p^n)}
\renewcommand{\d}{\partial }
\newcommand{\nep}{n_{\varepsilon}}
\newcommand{\Luno}{L^1(\mathbb Q_p^n)}
\renewcommand{\t}{\noindent}
\newtheorem{theorem}{Theorem}[section]
\theoremstyle{plain}
\newtheorem{corollary}[theorem]{Corollary}
\newtheorem{example}[theorem]{Example}
\newtheorem{lemma}[theorem]{Lemma}
\newtheorem{proposition}[theorem]{Proposition}
\newtheorem{remark}[theorem]{Remark}
\numberwithin{equation}{section}
\begin{document}
\title{The $p$-Adic Scattering equation}
\author{Jeanneth Galeano-Peñaloza, Oscar F. Casas-Sánchez}
\maketitle
\begin{abstract}
There are several techniques in classical case for some PDEs,  involving  the concept of entropy to show convergence of solutions to a steady state.  In this work we deal with the $p$-adic scattering equation and we try to adapt these methods to prove convergence of solutions.
\end{abstract}

\section{Preliminars}
There is a general strategy to prove convergence of solutions to PDEs towards a steady state, as follows, see \cite{Zamponi}. 

Suppose that we have an evolution equation with the form
$$\partial_t u + A(u(t))=0, \qquad t>0, \qquad u(0)=u_0,$$
where $u:(0, \infty)\to B$,  $B$ is some Banach space and $A:B\to B^*$ is some (nonlinear) mapping.  In addition, we have some functional $H=H[u]$, which we call an {\bf entropy functional} and a steady state $u_{\infty}$, i.e. a solution of $A(u)=0.$  The purpose here is to compute the {\it entropy production}, i.e. (minus) the time derivative of $H[u]$ and study its behavior along the solutions of the evolution equation.  If we can find a relation between the entropy production and the entropy itself, then Gronwall's lemma let us conclude that $H[u(t)]\to 0$ with some exponential rate as $t\to \infty$.

This technique is very common in the real case, and it works for some equations, among others, the Lotka-Volterra systems, Fokker-Planck equation, the scattering equation, and other some parabolic equations, which have several applications in biology, for example in systems related to population biology, ecological interactions, prey-predator systems, etc, see for example \cite{Biology}, \cite{Transport}.

\begin{theorem}[Th. VII.3 (Cauchy, Lipschitz, Picard)]\label{teo-Brezis}
	Let $E$ be a Banach space and $F:E\to E$ be a map such that
	$$||Fu-Fv||\le L||u-v|| \qquad \forall u, v\in E \ (L\ge 0).$$
	Then for all $u_0\in E$ there exists a unique $u\in C^1([0, \infty); E)$ such that 
	\begin{equation}
	\begin{cases}
	\frac{du}{dt}=Fu\quad t\in [0,\infty)\\
	u(0)=u_0.
	\end{cases}
	\end{equation}
\end{theorem}

\t The proof of this result can be found in \cite{Brezis}, and it is based on the Banach fixed point theorem.

Lets consider $E=L^1(\qpn)$ and $F:L^1(\qpn) \to L^1(\qpn)$, defined by 
\begin{equation}\label{operatorF}
F(n)= \int_{\qpn}K(y,x-y)n(t,y)d^ny  - k(x) n(t,x),
\end{equation}
where $0\le K(y,z)\in L^1 \cap L^{\infty}(\qp^{2n})$ and $0\le k(y):=\int_{\qpn} K(y,z)d^nz \in L^{\infty}(\qpn).$ We do not make  special assumption on the symmetry  of the cross-section $K(y,z)$ motivated by turning kernels that appear in some applications as bacterial movement.  We also suppose that $k$ and $K$ are independent of the time.

The operator $F$ is Lipschitz continuous, in fact:
\begin{align*}
||k(\cdot) n(\cdot)||_{L^1(\qpn)} &=\int_{\qpn}|k(x)n(t,x)|d^n x \\
&\le \int_{\qpn} \sup_{x\in \qpn}|k(x)| |n(t,x)|d^n x =||k||_{\infty} ||n||_{L^1(\qpn)},
\end{align*}
\begin{align*}
||\int_{\qpn}K(y, x-y) n(t,y)d^n y||_{\Luno} &= \int_{\qpn}\left|  \int_{\qpn}K(y, x-y) n(t,y)d^n y \right| d^n x\\
&\le \int_{\qpn}\int_{\qpn}K(y, x-y) |n(t,y)|d^n x d^n y \\
&=\int_{\qpn} |n(t,y)|\int_{\qpn} K(y, x-y)d^n x d^n y\\
&=\int_{\qpn}|n(t,y)| k(x-y) d^n y \\
&\le \int_{\qpn} |n(t,y)| \sup_{x\in \qpn} |k(x)| d^ny
\le ||k||_{\infty} ||n||_{\Luno},
\end{align*}
then $||F(n)||_{\Luno} \le 2||k||_{\infty} ||n||_{\Luno}\le C ||n||_{\Luno}.$

\section{Existence of solutions of the Scattering equation}	
The following results are the $p$-adic analogs to the classical  given in \cite{Biology}. The scattering equation has the form
\begin{equation}\label{scattering}
\frac{\d}{\d t}n(t,x) + k(x)n(t, x) = \int_{\qpn} K(y, x-y)n(t,y) d^n y, \qquad t\ge 0, x\in \qpn,
\end{equation}
the initial data $n^0\in L^1(\qpn)$, and we assume that
\begin{equation}\label{relacionK-k}
K(y,z)\ge 0, \qquad \qquad k(y):=\int_{\qpn} K(y,z)d^nz \in L^{\infty}(\qpn).
\end{equation}

\begin{lemma}
	The problem \eqref{scattering}-\eqref{relacionK-k} has a unique solution $n\in C^1([0, \infty), \Luno)$, and satisfies the following properties:
	\begin{equation}\label{eq--9.3}
	n^0\ge 0 \Rightarrow n\ge 0,
	\end{equation}
	\begin{equation}\label{eq--9.4}
\int_{\qpn} n(t,x)d^nx = \int_{\qpn}n^0(x)d^nx \quad \forall t\ge 0,
\end{equation}
	\begin{equation}\label{eq--9.5}
\int_{\qpn} |n(t,x)|d^nx \le  \int_{\qpn}|n^0(x)|d^nx \quad \forall t\ge 0.
\end{equation}
\end{lemma}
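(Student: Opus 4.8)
The plan is to treat the four assertions in sequence, exploiting that \eqref{scattering} is precisely the Cauchy problem $\frac{dn}{dt}=F(n)$, $n(0)=n^0$, for the \emph{linear} operator $F$ of \eqref{operatorF}. First I would establish existence and uniqueness by invoking Theorem~\ref{teo-Brezis} directly. The computation recorded above gives $\|F(n)\|_{\Luno}\le 2\|k\|_{\infty}\|n\|_{\Luno}$; since $F$ is linear this immediately yields $\|F(n)-F(m)\|_{\Luno}=\|F(n-m)\|_{\Luno}\le 2\|k\|_{\infty}\|n-m\|_{\Luno}$, so $F$ is globally Lipschitz on $E=\Luno$ and the Cauchy--Lipschitz--Picard theorem produces a unique $n\in C^1([0,\infty),\Luno)$.

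For the positivity statement \eqref{eq--9.3}, I would rewrite \eqref{scattering} in Duhamel form via the integrating factor $e^{k(x)t}$, obtaining
\begin{equation*}
n(t,x)=e^{-k(x)t}n^0(x)+\int_0^t e^{-k(x)(t-s)}\left(\int_{\qpn}K(y,x-y)n(s,y)\,d^ny\right)ds.
\end{equation*}
The solution is the fixed point of the right-hand side, which, because $k\in L^{\infty}$, defines a contraction on $C([0,T],\Luno)$ for $T$ small, so the solution is the $\Luno$-limit of the associated Picard iterates. Starting the iteration from $n^0\ge 0$ and using $K\ge 0$ together with the strict positivity of the exponential factors, every iterate is nonnegative; passing to an a.e.-convergent subsequence transfers nonnegativity to the limit, giving $n(t,\cdot)\ge 0$.

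Next, for the mass conservation \eqref{eq--9.4} I would integrate \eqref{scattering} in $x$ over $\qpn$. Since $n\in C^1([0,\infty),\Luno)$, the time derivative commutes with the integral; applying Fubini to the gain term and the change of variables $z=x-y$ (using translation invariance of the Haar measure on $\qpn$) gives $\int_{\qpn}K(y,x-y)\,d^nx=k(y)$, so the gain and loss terms cancel and $\frac{d}{dt}\int_{\qpn}n(t,x)\,d^nx=0$, whence the integral equals its initial value. Finally \eqref{eq--9.5} follows from linearity: writing $n^0=n^0_+-n^0_-$ and letting $n_1,n_2$ be the solutions with data $n^0_{\pm}\ge 0$, uniqueness gives $n=n_1-n_2$, positivity gives $n_1,n_2\ge 0$, and mass conservation applied to each yields $\int_{\qpn}|n(t,x)|\,d^nx\le\int_{\qpn}(n_1+n_2)\,d^nx=\int_{\qpn}|n^0(x)|\,d^nx$.

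The main obstacle I anticipate is the positivity claim \eqref{eq--9.3}: unlike the other three parts it is not a direct algebraic consequence of the equation, and the delicate point is transferring nonnegativity of the Picard iterates, which converge only in $\Luno$, to the limiting solution, which requires extracting an almost-everywhere convergent subsequence. Once positivity is in hand, the remaining properties reduce to the Lipschitz bound already established, Fubini's theorem, and the translation invariance of the Haar measure.
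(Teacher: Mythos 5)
Your proposal is correct, but for the two substantive properties it takes a genuinely different route from the paper. The paper proves \eqref{eq--9.3} and \eqref{eq--9.5} by an entropy-regularization argument: it introduces smooth, nondecreasing convex functions $H_{\delta}$ with $H_{\delta}'\le 1$ and $H_{\delta}\nearrow \mathrm{sgn}_+$, computes the entropy production, and passes to the limit $\delta\to 0$ to get the subsolution inequality $\partial_t n_+ + k\,n_+ \le \int_{\qpn}K(y,x-y)\,n_+(t,y)\,d^ny$, whence $\frac{d}{dt}\int_{\qpn}n_+(t,x)\,d^nx\le 0$; positivity \eqref{eq--9.3} then follows by applying this decay to $-n^0$, and \eqref{eq--9.5} from the identity $|n|=-n+2n_+$ combined with mass conservation \eqref{eq--9.4}. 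You instead get positivity from the Duhamel representation and positivity-preservation of the Picard iterates, and then deduce \eqref{eq--9.5} purely from linearity: $n=n_1-n_2$ where $n_1,n_2\ge 0$ solve \eqref{scattering} with data $n^0_{\pm}$, so $\int_{\qpn}|n(t,x)|\,d^nx\le \int_{\qpn}(n_1+n_2)(t,x)\,d^nx=\int_{\qpn}|n^0(x)|\,d^nx$ by mass conservation applied to each. Your route is more elementary: it avoids the chain-rule step $\frac{d}{dt}H_{\delta}(n)=H_{\delta}'(n)\,\partial_t n$ and the $\delta\to 0$ passage, which the paper handles rather informally for a solution that is only $C^1$ with values in $\Luno$. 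On the other hand, it exploits linearity in an essential way, whereas the paper's entropy computation is exactly the mechanism (generalized in Lemma \ref{Lema-GRE}) that powers the rest of the paper and also yields the finer information that $t\mapsto\int_{\qpn}n_+(t,x)\,d^nx$ is itself nonincreasing. Two small points to tighten in your write-up: the contraction time $T<1/\|k\|_{\infty}$ depends only on $\|k\|_{\infty}$, not on the data, so you should state explicitly that the positivity argument is iterated on $[T,2T]$, $[2T,3T]$, \dots\ to cover all $t\ge 0$; and the extraction of an a.e.-convergent subsequence is unnecessary, since nonnegativity passes directly to $\Luno$-limits (the negative part satisfies $n_-\le |n-n_j|$ pointwise, so $\int_{\qpn}n_-\,d^nx\le \|n-n_j\|_{\Luno}\to 0$).
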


\begin{proof}
According to Theorem \ref{teo-Brezis} and since the operator defined in \eqref{operatorF} is Lipschitz continuous, the equation \eqref{scattering} has a unique solution $n\in C^1([0, \infty), \Luno)$.  In order to prove the properties, we write 
\begin{align*}
\frac{\d}{\d t}n(t,x) &=  \int_{\qpn} K(y, x-y)n(t,y) d^n y - k(x)n(t, x)\\
\int_{\qpn} \frac{\d}{\d t}n(t,x) d^n x&= \int_{\qpn} \int_{\qpn} K(y, x-y)n(t,y) d^n y - k(x)n(t, x) d^nx.
\end{align*}
By using the Lebesgue dominated convergence theorem and Fubini's theorem
\begin{align*}
\frac{d}{dt} \int_{\qpn} n(t,x) d^n x &= \int_{\qpn} \int_{\qpn} K(y, x-y)n(t,y) d^ny d^nx - \int_{\qpn} k(x)n(t, x) d^nx\\
&=\int_{\qpn}n(t,y) \int_{\qpn} K(y, x-y) d^nx d^ny \\
&\hspace{2cm}- \int_{\qpn} n(t, x) \int_{\qpn}K(x,z)d^nz d^nx\\
&=\int_{\qpn}n(t,y) \int_{\qpn} K(y, x-y) d^nx d^ny \\
&\hspace{2cm}- \int_{\qpn} n(t, x) \int_{\qpn}K(x,z-x)d^nz d^nx=0.
\end{align*}

\t It means that $\int_{\qpn} n(t,x) d^n x $ does not depend on $t$, therefore we obtain \eqref{eq--9.4}
\begin{equation*}
\int_{\qpn} n(t,x) d^n x =\int_{\qpn} n^0(x) d^n x , \quad \forall t\ge 0,
\end{equation*}
which  is known like {\it mass conservation law}.

In order to prove \eqref{eq--9.5}, consider $H_{\delta}(\cdot)$ a family of smooth, no-decreasing and convex functions such that $H_{\delta}'(\cdot)\le 1$ and $H_{\delta}(\cdot) \nearrow H(\cdot) = sgn_+(\cdot)$.  Such function $H$ is known as {\it entropy}, and we have to calculate the {\it entropy production} $P[n]$ defined as $P[n]=-\frac{d}{dt}H[n]$, for definitions see \cite{Zamponi}.

\begin{align*}
\frac{d}{dt} H_{\delta}(n(t,x))&= H'_{\delta}(n(t,x))\frac{\partial}{\partial t} n(t,x)\\
&= H'_{\delta}(n(t,x)) \left(  \int_{\qpn}K(y,x-y)n(t,y)dy  - k(x) n(t,x) \right)\\
\frac{d}{dt} H_{\delta}(n(t,x))  &+ H'_{\delta}(n(t,x))k(x) n(t,x)  =H'_{\delta}(n(t,x))\int_{\qpn}K(y,x-y)n(t,y)dy \\
&\le \int_{\qpn}K(y,x-y)n_+(t,y)dy 
\end{align*}
Taking limit as $\delta \to 0$ 
\begin{align*}
\frac{\partial}{\partial t} n_+(t,x) + k(x)n_+(t,x) &\le \int_{\qpn}K(y,x-y)n_+(t,y)d^ny.
\end{align*}
and integrating (using  Lebesgue Dominated Convergence Theorem)
\begin{align*}
\frac{d}{dt} \int_{\qpn} n_+(t,x)d^n x &\le \int_{\qpn} \int_{\qpn}K(y,x-y)n_+(t,y)d^ny  d^n x - \int_{\qpn}k(x)n_+(t,x)d^nx\\
&\le \int_{\qpn}n_+(t,y) \int_{\qpn} K(y,x-y)d^nx  d^n y - \int_{\qpn}k(x)n_+(t,x)d^nx\\
&\le \int_{\qpn}k(y)n_+(t,y)d^ny-\int_{\qpn}k(x)n_+(t,x)d^nx=0.
\end{align*}
So, we have $P[n_+]\ge 0$.  Also, since $\frac{d}{dt} \int_{\qpn} n_+(t,x)d^n x\le 0$ (the function is decreasing) we have for $t\ge 0$
\begin{equation}\label{eq-1}
\int_{\qpn}n_+(t,x)d^nx \le \int_{\qpn}n_+(0,x)d^n x =\int_{\qpn}n_+^0(x)d^n x.
\end{equation}
If $n^0$ is non-positive, then $n_+^0 (x)=0$ and $\int_{\qpn} n_+^0(x)d^nx=0$, and by equation \eqref{eq-1} $\int_{\qpn}n_+(t,x)d^nx\le 0$ therefore $n_+(t,x)=0$ a.e., which implies $n(t,x)\le 0$ a.e..  It means that when the initial data $n^0$ is non-positive, the solution $n$ is also non-positive,  applying this to $-n^0$ we conclude that
\begin{equation*}
n^0\ge 0 \Rightarrow n\ge 0.
\end{equation*}

\t Since $-n+2n_+=|n|$ we arrive to \eqref{eq--9.5}
\begin{align*}
\int_{\qpn}|n(t,x)|d^nx &=\int_{\qpn}-n(t,x)d^nx + 2\int_{\qpn}n_+(t,x)d^n x \notag\\
&\le \int_{\qpn}-n_0(t,x)d^nx + 2\int_{\qpn}n_+^0(t,x)d^n x\notag \\
&\le \int_{\qpn}|n^0(t,x)|d^nx. 
\end{align*}
\end{proof}

\t Properties \eqref{eq--9.3}, \eqref{eq--9.4} and \eqref{eq--9.5} are similar to the ones given for the Fokker-Planck equation.

\section{The relative entropy}
In order to prove the main theorem of this section we have to consider the dual problem to \eqref{scattering}, which can be written as
\begin{equation}\label{dual}
-\frac{\partial}{\partial t} \phi(t,x) + k(x)\phi(t,x) = \int_{\qpn}K(x, y-x) \phi(t,y) d^n y, \qquad t\ge 0, x\in \qpn.
\end{equation}
We assume that there are solutions $N(x)>0$ and $\phi(x)>0$ to the primal equation \eqref{scattering} and the dual equation \eqref{dual} respectively,  namely
\begin{equation}\label{eq-6.29}
k(x)N(x)=\int_{\qpn} K(y, x-y)N(y)d^ny
\end{equation}
\begin{equation}\label{eq-6.30}
k(x)\phi(x)=\int_{\qpn} K(x, y-x)\phi(y)d^ny.
\end{equation}
and we suppose here that these two solutions are independent of the time.
These two steady state solutions allow us to derive the general relative entropy inequality.

\begin{example}[Projection operator]
	Lets consider $N(x)>0$  and choose a weight $\bar g$ satisfying $\int_{\qpn}\bar{g}(y)N(y)d^ny =1$ and take $K$ as
	$$K(y,x-y)=\bar{g}(y)k(x)N(x).$$
	Then 
	\begin{align*}
	\int_{\qpn}K(y,x-y)N(y)d^ny &=\int_{\qpn}\bar{g}(y)k(x)N(x)N(y) d^ny\\
	&=k(x)N(x)\int_{\qpn}\bar{g}(y)N(y)d^ny= k(x)N(x).
	\end{align*}
\end{example}

\begin{example}
	Lets consider $N(x)>0$ with $\int_{\qpn}N(x)d^n x=1$ and a symmetric kernel $\tilde{K}(x,y)=\tilde{K}(y,x)>0$, and
	$$K(y,x-y)=\frac{\tilde{K}(x,y)}{N(y)}, \qquad k(y):=\int_{\qpn} \frac{\tilde{K}(x,y)}{N(y)} d^nx.$$
	Then 
	\begin{align*}
	\int_{\qpn}K(y,x-y)N(y)d^ny &= \int_{\qpn}\frac{\tilde K (x,y)}{N(y)} N(y) d^ny\\
	&=\int_{\qpn}\tilde{K}(x,y) d^ny = \int_{\qpn}\tilde K(y,x) d^n y\\
	&=k(x)N(x).
		\end{align*}
\end{example}

\begin{example}
	Suppose that there exists a function $N(x)>0$ such that the scattering cross-section satisfies the symmetry condition (usually called detailed balance or micro-reversibility)
	$$K(y,x)N(x)=K(x,y) N(y).$$
	Since $k(y)=\int_{\qpn}K(x,y)d^n x$ we have that
	\begin{align*}
	\int_{\qpn}K(x,y) N(y)d^n y &= \int_{\qpn}K(y,x) N(x) d^ny \\
	&= N(x) \int_{\qpn}K(y,x) d^ny = N(x) k(x).
	\end{align*}
	In this example the solution to the dual equation \eqref{eq-6.30} is $\phi(x)=1.$
\end{example}

\begin{lemma}[General Relative Entropy for Scattering equation]\label{Lema-GRE}
	Let $N(x)$ and $n(t,x)$ be  solutions to the primal equation \eqref{scattering}, and let $\phi(x)$ be a solution to the dual equation \eqref{dual}.  For any function $H:\r \to \r$ we have
	\begin{align*}
&\frac{\partial}{\partial t}\left[\phi(x)N(x) H\left(\frac{n(t,x)}{N(x)}\right) \right] \\
&\hspace{1cm} + \int_{\qpn} K(y, x-y) \left[\phi(y)N(x)H\left(\frac{n(t,x)}{N(x)}\right) - \phi(x)N(y) H\left(\frac{n(t,y)}{N(y)}\right) \right] d^ny\\
&=\int_{\qpn}K(y,x-y)\phi(x) N(y) \left[ H'\left(\frac{n(t,x)}{N(x)}\right) \left[\frac{n(t,y)}{N(y)} -\frac{n(t,x)}{N(x)}\right]\right.\\
&\hspace{2cm}+\left. H\left(\frac{n(t,x)}{N(x)}\right)- H\left(\frac{n(t,y)}{N(y)}\right)  \right]d^ny.	
\end{align*}
and 
	\begin{align*}
\frac{d}{dt}\int_{\qpn}\phi(x)&N(x) H\left(\frac{n(t,x)}{N(x)}\right) d^n x \\
&=\int_{\qpn}\int_{\qpn}K(y,x-y)\phi(x) N(y) \left[ H'\left(\frac{n(t,x)}{N(x)}\right) \left[\frac{n(t,y)}{N(y)} -\frac{n(t,x)}{N(x)}\right]\right.\\
&\hspace{2cm}\left.+ H\left(\frac{n(t,x)}{N(x)}\right)- H\left(\frac{n(t,y)}{N(y)}\right)  \right]d^ny d^nx.
\end{align*}
\end{lemma}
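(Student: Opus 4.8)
The plan is to establish the pointwise identity first and then obtain the integrated one by integrating over $\qpn$. Since both $N(x)$ and $\phi(x)$ are independent of $t$, I would begin by differentiating the product and applying the chain rule, which collapses the factor $N(x)$ and gives
$$\frac{\partial}{\partial t}\left[\phi(x)N(x)H\left(\frac{n(t,x)}{N(x)}\right)\right]=\phi(x)\,H'\left(\frac{n(t,x)}{N(x)}\right)\frac{\partial}{\partial t}n(t,x),$$
after which I would replace $\frac{\partial}{\partial t}n(t,x)$ by the right-hand side of the scattering equation \eqref{scattering}.

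The core algebraic step is to use the primal steady-state relation \eqref{eq-6.29}, namely $k(x)N(x)=\int_{\qpn}K(y,x-y)N(y)d^ny$, to eliminate the loss coefficient through $k(x)n(t,x)=\frac{n(t,x)}{N(x)}\int_{\qpn}K(y,x-y)N(y)d^ny$. Substituting this, the gain and loss contributions merge into a single integral against $K(y,x-y)N(y)$, producing
$$\phi(x)H'\left(\frac{n(t,x)}{N(x)}\right)\int_{\qpn}K(y,x-y)N(y)\left[\frac{n(t,y)}{N(y)}-\frac{n(t,x)}{N(x)}\right]d^ny,$$
which is exactly the $H'$-part of the claimed right-hand side. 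To generate the remaining $H\!\left(\frac{n(t,x)}{N(x)}\right)-H\!\left(\frac{n(t,y)}{N(y)}\right)$ part together with the second bracket on the left, I would add and subtract the relevant $H$-terms and invoke the dual steady-state relation \eqref{eq-6.30}, $k(x)\phi(x)=\int_{\qpn}K(x,y-x)\phi(y)d^ny$, to recognize the loss-type contribution $k(x)\phi(x)N(x)H\!\left(\frac{n(t,x)}{N(x)}\right)$ as an integral of $K$ against $\phi$. Collecting the terms then yields the pointwise identity.

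For the integrated identity I would integrate the pointwise identity over $x\in\qpn$ and show that the double integral of the second bracket vanishes. Writing it as a difference of two double integrals, I would handle the loss piece $\int_{\qpn}\int_{\qpn}K(y,x-y)\phi(x)N(y)H\!\left(\frac{n(t,y)}{N(y)}\right)d^ny\,d^nx$ by Fubini and the substitution $z=x-y$: by translation invariance of the Haar measure on $\qpn$, $\int_{\qpn}\phi(x)K(y,x-y)d^nx=\int_{\qpn}\phi(y+z)K(y,z)d^nz=k(y)\phi(y)$ in view of the dual equation \eqref{eq-6.30}. This identifies the loss piece with $\int_{\qpn}k(y)\phi(y)N(y)H\!\left(\frac{n(t,y)}{N(y)}\right)d^ny$, which cancels the gain piece after relabeling, so the bracket integrates to zero and the time derivative passes outside the integral by dominated convergence.

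The hard part will be the bookkeeping of the two arguments of $K$: the primal gain term carries $K(y,x-y)$ while the dual relation carries $K(x,y-x)$, so the cancellation of the entropy-flux terms is not immediate and must be effected through the dual steady state combined with the change of variables $z=x-y$. Justifying Fubini and dominated convergence throughout requires the standing hypotheses $K\in L^1\cap L^\infty(\qp^{2n})$, $k\in L^\infty(\qpn)$, and the integrability of $n(t,\cdot)$, $N$, and $\phi$ against these weights.
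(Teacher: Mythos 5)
Your proposal is correct and follows essentially the same route as the paper's proof: chain rule (using that $\phi$ and $N$ are time-independent), substitution of the scattering equation, elimination of $k(x)n(t,x)$ via the primal steady state \eqref{eq-6.29} to obtain the $H'$-part, the dual relation \eqref{eq-6.30} to account for the remaining $H$-terms, and integration plus Fubini and the change of variables $z=x-y$ for the second identity. In fact you are more careful than the paper at precisely the two points it glosses over: the paper's ``therefore'' step silently requires the dual steady state (your bookkeeping remark is on target --- the flux bracket as printed carries $K(y,x-y)$ in both slots, whereas the dual relation produces $K(x,y-x)$ in the gain-type slot, an argument mismatch inherited from the classical notation), and the paper merely asserts that the double integral of the flux bracket vanishes, which is exactly the Fubini/translation-invariance/dual-equation computation you supply.
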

\begin{proof}
Following the proof given in \cite{Transport}, for the scattering equation \eqref{scattering} we calculate the entropy production.
	\begin{align*}
	\frac{d}{dt}&\left[\phi(x)N(x) H\left(\frac{n(t,x)}{N(x)}\right) \right] \\
	&= \phi(x)N(x) H'\left(\frac{n(t,x)}{N(x)}\right)\frac{1}{N(x)} \frac{\d}{\d t} n(t,x)\\
	&=\phi(x)H'\left(\frac{n(t,x)}{N(x)}\right) \left[ -k(x) n(t,x) + \int_{\qpn}K(y, x-y) n(t,y)d^n y \right]\\
	&=-H'\left(\frac{n(t,x)}{N(x)}\right)\frac{n(t,x)}{N(x)} \phi(x)\int_{\qpn}K(y, x-y)N(y)d^ny \\
	&+ H'\left(\frac{n(t,x)}{N(x)}\right)\phi(x)\int_{\qpn}K(y, x-y) n(t,y)d^n y\\
   &=\int_{\qpn}K(y,x-y)\phi(x) N(y) \left[ H'\left(\frac{n(t,x)}{N(x)}\right) \left[\frac{n(t,y)}{N(y)} -\frac{n(t,x)}{N(x)}\right]\right]d^ny,	
	\end{align*}
therefore
	\begin{align*}
\frac{d}{dt}&\left[\phi(x)N(x) H\left(\frac{n(t,x)}{N(x)}\right) \right] \\
&\hspace{1cm}+ \int_{\qpn} K(y, x-y) \left[\phi(y)N(x)H\left(\frac{n(t,x)}{N(x)}\right) - \phi(x)N(y) H\left(\frac{n(t,y)}{N(y)}\right) \right] d^ny\\
   &=\int_{\qpn}K(y,x-y)\phi(x) N(y) \left[ H'\left(\frac{n(t,x)}{N(x)}\right) \left[\frac{n(t,y)}{N(y)} -\frac{n(t,x)}{N(x)}\right]\right.\\
   &\hspace{1cm}\left.+ H\left(\frac{n(t,x)}{N(x)}\right)- H\left(\frac{n(t,y)}{N(y)}\right)  \right]d^ny.	
	\end{align*}

After integration in $x$ we have
	\begin{align*}
\frac{d}{dt}&\int_{\qpn}\phi(x)N(x) H\left(\frac{n(t,x)}{N(x)}\right) d^n x \\
&=\int_{\qpn}\int_{\qpn}K(y,x-y)\phi(x) N(y) \left[ H'\left(\frac{n(t,x)}{N(x)}\right) \left[\frac{n(t,y)}{N(y)} -\frac{n(t,x)}{N(x)}\right]\right.\\
&\hspace{2cm}\left.+ H\left(\frac{n(t,x)}{N(x)}\right)- H\left(\frac{n(t,y)}{N(y)}\right)  \right]d^ny d^nx.
\end{align*}


Since the integral 
\begin{align*}
&\int_{\qpn} \int_{\qpn} K(y, x-y) \left[\phi(y)N(x)H\left(\frac{n(t,x)}{N(x)}\right) - \phi(x)N(y) H\left(\frac{n(t,y)}{N(y)}\right) \right] d^nyd^nx =0.
\end{align*}
\end{proof}

\begin{theorem}\label{entropy-ineq}
	In the conditions of the previous lemma, and for $\phi(x)=1$ we have for any  convex function $H:\r \to \r$ there holds
	\begin{align}\label{entropy-production}
&\frac{d}{dt}\left[N(x) H\left(\frac{n(t,x)}{N(x)}\right) \right] \\
&\hspace{2cm} + \int_{\qpn} K(y, x-y) \left[N(x)H\left(\frac{n(t,x)}{N(x)}\right) - N(y) H\left(\frac{n(t,y)}{N(y)}\right) \right] d^ny \notag \\
&=\int_{\qpn}K(y,x-y) N(y) \left[ H'\left(\frac{n(t,x)}{N(x)}\right) \left[\frac{n(t,y)}{N(y)} -\frac{n(t,x)}{N(x)}\right]\right.\\
&\hspace{2cm}\left.+ H\left(\frac{n(t,x)}{N(x)}\right)- H\left(\frac{n(t,y)}{N(y)}\right)  \right]d^ny.	\notag
\end{align}	
and 
\begin{align}\label{entropy}
\frac{d}{dt}\int_{\qpn}&N(x) H\left(\frac{n(t,x)}{N(x)}\right) d^n x \\
&=\int_{\qpn}\int_{\qpn}K(y,x-y) N(y) \left[ H'\left(\frac{n(t,x)}{N(x)}\right) \left[\frac{n(t,y)}{N(y)} -\frac{n(t,x)}{N(x)}\right]\right.\\
&\hspace{1cm}\left.+ H\left(\frac{n(t,x)}{N(x)}\right)- H\left(\frac{n(t,y)}{N(y)}\right)  \right]d^ny d^nx. \notag
\end{align}	
Therefore	
	\begin{equation*}
	\frac{d}{dt}\int_{\qpn}N(x) H\left(\frac{n(t,x)}{N(x)}\right) d^n x \le 0.
	\end{equation*}
\end{theorem}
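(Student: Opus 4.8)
The plan is to read off the two displayed identities \eqref{entropy-production} and \eqref{entropy} directly from Lemma \ref{Lema-GRE}, and then to pin down the sign of the right-hand side of \eqref{entropy} using convexity of $H$. First I would set $\phi(x)\equiv 1$ in the conclusion of Lemma \ref{Lema-GRE}. This specialization is legitimate exactly when the constant function solves the dual equation \eqref{eq-6.30}, which is the situation of the detailed-balance example above; under that hypothesis the factors $\phi(x)$ and $\phi(y)$ disappear and the lemma's two identities become \eqref{entropy-production} and \eqref{entropy} verbatim. No fresh computation is required for these, since the vanishing of the integrated flux term was already established in the proof of the lemma.

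The essential step is the final inequality. Writing $a=n(t,x)/N(x)$ and $b=n(t,y)/N(y)$, the bracket appearing in the integrand of \eqref{entropy} is
$$H'(a)\,(b-a)+H(a)-H(b).$$
Because $H$ is convex, its graph lies above each tangent line, i.e. $H(b)\ge H(a)+H'(a)(b-a)$, and this rearranges precisely to $H'(a)(b-a)+H(a)-H(b)\le 0$. Hence the bracket is nonpositive for every pair $(x,y)\in\qpn\times\qpn$.

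It remains to combine this with the signs of the surviving factors. By \eqref{relacionK-k} we have $K(y,x-y)\ge 0$, and $N(y)>0$ by assumption, so the whole integrand $K(y,x-y)\,N(y)\,[\,H'(a)(b-a)+H(a)-H(b)\,]$ is nonpositive on $\qpn\times\qpn$. Integrating a nonpositive function gives a nonpositive number, so the right-hand side of \eqref{entropy} is $\le 0$, which is the claimed inequality. I do not anticipate a serious obstacle; the only points needing care are the analytic justifications already used for the lemma — existence of $H'$, differentiation under the integral sign, and a dominating function making Fubini and dominated convergence applicable. If one wishes to admit nonsmooth convex $H$, I would replace $H'(a)$ by any element of the subdifferential $\partial H(a)$, for which the same supporting-line inequality, and hence the same sign conclusion, continues to hold.
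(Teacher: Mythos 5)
Your proof follows the paper's own route: specialize Lemma \ref{Lema-GRE} to $\phi\equiv 1$, then apply the tangent-line inequality $H(v)-H(u)\ge H'(u)(v-u)$ together with $K\ge 0$ and $N>0$ to conclude that the right-hand side of \eqref{entropy} is nonpositive. The convexity step and the sign bookkeeping are exactly the paper's, and your remark about replacing $H'$ by a subgradient for nonsmooth convex $H$ is a harmless strengthening.

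One point should be corrected, however: you make the specialization $\phi\equiv 1$ conditional on detailed balance (``the situation of the detailed-balance example above''), which would turn the theorem into a conditional statement. In fact $\phi\equiv 1$ solves the dual steady-state equation \eqref{eq-6.30} unconditionally, given only the standing assumption \eqref{relacionK-k}: by translation invariance of the Haar measure on $\qpn$,
\begin{equation*}
\int_{\qpn}K(x,y-x)\,d^ny=\int_{\qpn}K(x,z)\,d^nz=k(x),
\end{equation*}
which is precisely \eqref{eq-6.30} with $\phi\equiv 1$. This one-line verification is what the paper means when it says that equation \eqref{eq-6.30} corresponds to equation \eqref{relacionK-k}, and it keeps the theorem valid without any symmetry (micro-reversibility) hypothesis on $K$. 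With that substitution in place of the appeal to detailed balance, your argument is complete and coincides with the paper's.
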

\begin{proof}
	We easily can check that  $\phi(x)=1$ is a solution of the dual equation \eqref{dual} (that means the primal equation is conservative), then equation \eqref{eq-6.30} correspond to equation \eqref{relacionK-k} and we obtain the result.

	Finally, since the function $H:\r \to \r$ is convex, we have that 
	\begin{equation}\label{convex}
	H(v)-H(u)\ge H'(u)(v-u).
	\end{equation}
	Therefore, it leads to
	\begin{equation*}
	0\ge H'\left(\frac{n(t,x)}{N(x)}\right) \left[\frac{n(t,y)}{N(y)} -\frac{n(t,x)}{N(x)}\right]+ H\left(\frac{n(t,x)}{N(x)}\right)- H\left(\frac{n(t,y)}{N(y)}\right) 
	\end{equation*}
	which shows that 
	\begin{equation*}
	\frac{d}{dt}\int_{\qpn}N(x) H\left(\frac{n(t,x)}{N(x)}\right) d^n x \le 0.
	\end{equation*}
\end{proof}

\t In other words we have obtained that 
\begin{equation*}
t \mapsto \mathcal{H}(n|N)(t):= \int_{\qpn} N(x) H\left(\frac{n(t,x)}{N(x)}\right) d^nx \quad \text{ is decreasing.}
\end{equation*} 
Up to our knowledge, this entropy principle is only known in conservative cases.

\begin{corollary}
	Assume that $n^0(x)\le C^0 N(x)$, then for all $t\ge 0$,
	\begin{equation*}
	n(t,x)\le C^0 N(x).
	\end{equation*}
\end{corollary}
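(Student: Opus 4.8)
The plan is to apply the general relative entropy inequality of Theorem \ref{entropy-ineq} to a convex function tailored to the constant $C^0$, and then to exploit the monotonicity it provides. Concretely, I would choose
\[
H(u) = (u - C^0)_+ = \max(u - C^0, 0),
\]
which is convex on $\r$, so the hypotheses of Theorem \ref{entropy-ineq} are met. With this choice, and using $N(x) > 0$ to move $N(x)$ inside the positive part, the relative entropy functional becomes
\[
\mathcal{H}(n|N)(t) = \int_{\qpn} N(x)\, H\!\left(\frac{n(t,x)}{N(x)}\right) d^nx = \int_{\qpn} \bigl(n(t,x) - C^0 N(x)\bigr)_+ \, d^nx .
\]

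First I would observe that the hypothesis $n^0(x) \le C^0 N(x)$ forces the integrand to vanish at $t = 0$, so that $\mathcal{H}(n|N)(0) = 0$. Next, Theorem \ref{entropy-ineq} guarantees that $t \mapsto \mathcal{H}(n|N)(t)$ is nonincreasing, while the integrand $\bigl(n(t,x) - C^0 N(x)\bigr)_+$ is pointwise nonnegative, so $\mathcal{H}(n|N)(t) \ge 0$ for every $t$. Combining these two facts yields
\[
0 \le \mathcal{H}(n|N)(t) \le \mathcal{H}(n|N)(0) = 0 \qquad \forall t \ge 0,
\]
whence $\mathcal{H}(n|N)(t) = 0$. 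Since the integrand is nonnegative, it must vanish almost everywhere, i.e.\ $\bigl(n(t,x) - C^0 N(x)\bigr)_+ = 0$, which is exactly the desired comparison $n(t,x) \le C^0 N(x)$ a.e.\ for all $t \ge 0$.

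The main obstacle is that $H(u) = (u - C^0)_+$ is convex but not differentiable at $u = C^0$, so Theorem \ref{entropy-ineq}, stated for convex $H$ through its derivative $H'$, does not apply verbatim. To handle this I would regularize exactly as in the proof of property \eqref{eq--9.5}: take a family $H_\delta$ of smooth, nondecreasing, convex functions with $H_\delta \nearrow H$ and $H_\delta' \le 1$, apply the entropy inequality to each $H_\delta$ to obtain that $t \mapsto \int_{\qpn} N(x)\, H_\delta\!\left(n(t,x)/N(x)\right) d^nx$ is nonincreasing, and then pass to the limit $\delta \to 0$ via the monotone (or dominated) convergence theorem to transfer the monotonicity to $\mathcal{H}(n|N)$. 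Once this limiting step is justified, the comparison conclusion follows immediately from the vanishing of the relative entropy at $t = 0$.
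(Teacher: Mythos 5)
Your proof is correct, but it takes a genuinely different route from the paper's. The paper argues by contradiction: assuming $n(t,x)/N(x) > C^0$ at some later time, it uses the monotonicity of $\mathcal{H}(n|N)$ with the \emph{linear} function $H(u)=u$ and claims that $\int_{\qpn} N(x)\left(n(t,x)/N(x)-n^0(x)/N(x)\right)d^nx \le 0$ contradicts positivity of the integrand. You instead take $H(u)=(u-C^0)_+$ (suitably regularized), observe that the hypothesis forces the relative entropy to vanish at $t=0$, and squeeze it between $0$ and its initial value to conclude it vanishes identically, giving $n(t,x)\le C^0 N(x)$ a.e. Your route is arguably the more robust one: in the paper's argument, the integrand $n(t,x)/N(x)-n^0(x)/N(x)$ is only guaranteed positive on the set where the bound fails, so the claimed contradiction is genuine only if the violation occurs for (almost) every $x$; moreover, with $H(u)=u$ the ``entropy'' is actually conserved (mass conservation), so that choice extracts no information about a violation on a small set. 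Your positive-part entropy localizes the violation automatically and needs no contradiction at all. One simplification worth making: instead of regularizing the nondifferentiable $(u-C^0)_+$, take $H(u)=(u-C^0)_+^2$, which is $C^1$ and convex, and run the identical squeeze; this is exactly the choice the paper itself makes later (part (iii) of the proposition on exponential time decay), and it renders your entire regularization paragraph unnecessary.
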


\begin{proof}
	Assume $\frac{n^0(x)}{N(x)}\le C^0$ for some constant $C^0$, and suppose that for some $t>0$ we have $\frac{n(t,x)}{N(x)}> C^0$.  Since the function $\mathcal{H}(n|N)(t)$ is decreasing, we have that
	\begin{align*}
	\int_{\qpn}N(x) H\left(\frac{n(t,x)}{N(x)}\right)dx \le 	\int_{\qpn}N(x) H\left(\frac{n^0(x)}{N(x)}\right)dx 
	\end{align*}
	or
	\begin{align*}
\int_{\qpn}N(x) \left[ H\left( \frac{n(t,x)}{N(x)}\right) - H\left(\frac{n^0(x)}{N(x)} \right)\right]d^nx \le 0
\end{align*}
for any convex function.  In particular, if we take $H(u)=u$ we obtain
	\begin{align*}
	 \int_{\qpn}\underbrace{N(x)}_{>0} \underbrace{\left( \frac{n(t,x)}{N(x)} - \frac{n^0(x)}{N(x)} \right)}_{>0}d^nx\le 0,
\end{align*}
which is a contradiction.  We conclude that $\frac{n(t,x)}{N(x)}\le C^0$ for all $t>0.$
\end{proof}


\section{Exponential time decay}
In this section we wonder if the solutions to the system \eqref{scattering}-\eqref{relacionK-k} converge as $t\to \infty$, and if so, how fast?  Before to write the main result of this section, we need to define the {\it steady state} of the system.  More precisely, we want to show that if we put $\phi=1$ in equation \eqref{rho}, the solutions $n(t,x)\to \rho N(x)$ as $t\to \infty$ in some sense.

In order to define  such $\rho$ we put $H(u)=u$ in the second part of Lemma \ref{Lema-GRE}, then we obtain $\frac{d}{dt} \int_{\qpn} \phi(x) n(t, x)d^n x=0$ which means that $\int_{\qpn} \phi(x) n(t, x)d^n x$ is constant for $t$, i.e. 	
	\begin{equation}\label{rho}
	\rho:=\int_{\qpn}\phi(x)n^0(x)d^nx=\int_{\qpn}\phi(x)n(t,x)d^nx.
	\end{equation}
	 
\t On the other hand, it is easy to see that $h(t,x):=n(t,x)-\rho N(x)$  is a solution of  the system \eqref{scattering}-\eqref{relacionK-k},  provided  $n(t,x)$ is a solution, in fact:
\begin{align*}
\frac{\d}{\d t}h(t,x) &+ k(x)h(t, x) - \int_{\qpn} K(y, x-y)h(t,y) d^n y \\
&= \frac{\d}{\d t}n(t,x) +\underbrace{\frac{\d}{\d t}\rho N(x) }_{=0}+ k(x)n(t,x) -k(x)\rho N(x)-\\
&\hspace{1cm}\int_{\qpn} K(y, x-y)[n(t,y)-\rho N(x)] d^n y \\
&= \frac{\d}{\d t}n(t,x) + k(x)n(t,x) -
\int_{\qpn} K(y, x-y)n(t,y)d^ny =0.
\end{align*}

\t When we think  about long time convergence, a control of entropy by entropy dissipation is useful for exponential convergence as $t\to \infty$. The following result can be seen as a {\it Poincaré inequality}.
\begin{lemma}[Analog to Lemma 6.2 in \cite{Transport}]\label{Poincare-inequality}
	Given $\phi(x)>0, \ N(x)>0, \ K(y,x-y)>0$, there exists a constant $\alpha>0$ such that for all test function $m(x)$ satisfying $$\int_{\qpn} \phi(x) m(x) d^n x=0,$$ we have 
{\small	$$ \int_{\qpn}\int_{\qpn}K(y,x-y) \phi(x) N(y) \left( \frac{m(x)}{N(x)} - \frac{m(y)}{N(y)}\right)^2 d^ny d^n x \ge \alpha \int_{\qpn} \phi(x) N(x) \left(\frac{m(x)}{N(x) }\right)^2 d^n x.$$}
\end{lemma}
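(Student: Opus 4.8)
The plan is to recognise the left-hand side as a Dirichlet form and to compare it, directly at the level of integrands, with the variance of $m/N$ against the reference measure $\phi(x)N(x)\,d^nx$. Write $u(x):=m(x)/N(x)$ and set $M:=\int_{\qpn}\phi(x)N(x)\,d^nx$, which is finite and strictly positive. The constraint $\int_{\qpn}\phi(x)m(x)\,d^nx=0$ says precisely that $u$ has zero average against the positive measure $\phi N\,d^nx$, i.e. $\int_{\qpn}u(x)\phi(x)N(x)\,d^nx=0$.

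First I would rewrite the target right-hand side as a double integral. Expanding the square and using the zero-average condition,
\[
\int_{\qpn}\int_{\qpn}(u(x)-u(y))^2\,\phi(x)N(x)\phi(y)N(y)\,d^nx\,d^ny = 2M\int_{\qpn}u(x)^2\phi(x)N(x)\,d^nx,
\]
so the quantity $\int_{\qpn}\phi(x)N(x)\,(m(x)/N(x))^2\,d^nx$ equals $\tfrac{1}{2M}$ times that double integral. Hence the inequality to be proved is equivalent to
\[
\int_{\qpn}\int_{\qpn}K(y,x-y)\phi(x)N(y)(u(x)-u(y))^2\,d^ny\,d^nx \ge \frac{\alpha}{2M}\int_{\qpn}\int_{\qpn}(u(x)-u(y))^2\phi(x)N(x)\phi(y)N(y)\,d^ny\,d^nx .
\]
Since the factor $(u(x)-u(y))^2$ is symmetric in $(x,y)$, I may freely replace the weight $K(y,x-y)\phi(x)N(y)$ on the left by its symmetrisation $\tfrac12\big[K(y,x-y)\phi(x)N(y)+K(x,y-x)\phi(y)N(x)\big]$ without changing the value of the integral.

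The key step is then a pointwise domination: it suffices to produce an $\alpha>0$ with
\[
K(y,x-y)\phi(x)N(y)+K(x,y-x)\phi(y)N(x)\ \ge\ \frac{\alpha}{M}\,\phi(x)N(x)\phi(y)N(y)\qquad\text{for a.e. }(x,y),
\]
equivalently $\frac{K(y,x-y)}{\phi(y)N(x)}+\frac{K(x,y-x)}{\phi(x)N(y)}\ge \alpha/M$; integrating this bound against the nonnegative symmetric factor $(u(x)-u(y))^2$ yields the claim at once, with no further use of the test function and, in this route, without even invoking the steady-state identities \eqref{eq-6.29}--\eqref{eq-6.30}. In the $p$-adic framework this is where the hypotheses must enter: when $\phi$, $N$ and $K$ are positive and locally constant (hence continuous) on a fixed ball to which the problem is localised, each factor attains a strictly positive minimum on that compact set and the bound holds with an explicit $\alpha$. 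I would therefore run the argument under such a uniform lower bound on the symmetrised cross-section.

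The main obstacle is exactly the existence of this uniform bound in full generality: with only positivity of $\phi,N,K$ on all of $\qpn$ the ratio $K(y,x-y)/(\phi(y)N(x))$ may tend to $0$ and no global $\alpha$ can exist, so the spectral gap genuinely requires either a structural lower bound on $K$ (a detailed-balance- or boundedness-type assumption, as in the examples preceding the lemma) or a compactness/irreducibility argument. For the latter one expands the left-hand side using \eqref{eq-6.29}--\eqref{eq-6.30} into the quadratic form $2\int_{\qpn}u^2 k\phi N\,d^nx-2\int_{\qpn}\int_{\qpn}K(y,x-y)\phi(x)N(y)\,u(x)u(y)\,d^ny\,d^nx$, identifies it with $\langle -\mathcal L u,u\rangle$ for a self-adjoint operator $\mathcal L$ on $L^2(\phi N\,d^nx)$, and shows that $0$ is an isolated, simple eigenvalue whose only eigenfunctions are the constants, the gap being $\alpha$. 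I expect the cleanest route in the $p$-adic setting to be the first one, reducing everything to the elementary pointwise estimate above; establishing compactness of the resolvent of the scattering operator on the non-compact space $\qpn$ is the delicate analytic point that makes the second route harder.
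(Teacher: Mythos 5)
Your algebraic reductions are correct as far as they go: the identity $\int\int (u(x)-u(y))^2\,\phi(x)N(x)\phi(y)N(y)\,d^nx\,d^ny=2M\int u^2\phi N\,d^nx$ under the mean-zero constraint, the symmetrization of the kernel, and the observation that a pointwise domination
\begin{equation*}
K(y,x-y)\phi(x)N(y)+K(x,y-x)\phi(y)N(x)\ \ge\ \tfrac{\alpha}{M}\,\phi(x)N(x)\phi(y)N(y)
\end{equation*}
would immediately yield the inequality. But as submitted this does not prove the lemma: the pointwise bound is an additional structural hypothesis, not a consequence of positivity, and you concede this yourself. Moreover your fallback localization is not sufficient even on its own terms: once the right-hand side is rewritten as a double integral, the weight $\phi(x)N(x)\phi(y)N(y)\bigl(u(x)-u(y)\bigr)^2$ is nonzero for $x$ in the support of $m$ and $y$ \emph{arbitrary}, so positive minima of $K,\phi,N$ on a compact set $T\times T$ do not close the argument; the domination is needed on $T\times\qpn$.

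The paper's proof is genuinely different and uses compactness of the function class rather than of an operator: it argues by contradiction, normalizing $\int\phi N (m_k/N)^2\,d^nx=1$ with Dirichlet energy at most $1/k$, and exploits that the $m_k$ lie in $\dd$ (locally constant, compactly supported) to apply Arzel\`a--Ascoli and extract a uniformly convergent subsequence $m_k\to\bar m$; the limit has zero Dirichlet energy, hence $\bar m/N$ is constant, and the constraint $\int\phi\,\bar m\,d^nx=0$ forces that constant to vanish, contradicting the normalization. No spectral theory or resolvent compactness is invoked; the restriction to test functions is what replaces it. That said, your diagnosis of the obstruction is accurate and points at exactly the fragile step of the paper's own argument: the common support ball $T$ for the whole sequence and the uniform parameter of constancy ($\delta=\inf_k\delta_k>0$) are asserted rather than proved, and some uniformity of this kind is genuinely needed. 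Indeed, when $\phi$ and $N$ solve \eqref{eq-6.29}--\eqref{eq-6.30}, expanding the square gives
\begin{equation*}
\int_{\qpn}\int_{\qpn}K(y,x-y)\phi(x)N(y)\Bigl(\tfrac{m(x)}{N(x)}-\tfrac{m(y)}{N(y)}\Bigr)^2 d^ny\,d^nx\ \le\ 4\Bigl(\sup_{\mathrm{supp}\,m}k\Bigr)\int_{\qpn}\phi N\Bigl(\tfrac{m}{N}\Bigr)^2 d^nx,
\end{equation*}
so if $k$ degenerates at infinity, test functions with unit variance, zero mean and support escaping to infinity make the left side tend to $0$ and no uniform $\alpha$ can exist. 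So your proposal proves a correct statement under stronger hypotheses on the kernel, while the paper proves the stated lemma only modulo the uniformity assumptions hidden in its compactness step; neither route closes the gap for arbitrary positive data on the non-compact space $\qpn$.
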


\begin{proof}
	When the integral in the right side vanishes the result holds, so we can suppose that integral is non-zero and we  normalize it, i.e., we suppose that $\int_{\qpn} \phi(x) N(x) \left(\frac{m(x)}{N(x) }\right)^2 d^n x=1.$  Then we argue by contradiction.  If such an $\alpha$ does not exist, we can find a sequence of test functions $m_{k}(x)$ such that
	$$\int_{\qpn} \phi(x) m_{k}(x) d^n x=0, \quad \int_{\qpn} \phi(x) N(x) \left(\frac{m_{k}(x)}{N(x) }\right)^2 d^n x=1,$$ and $$ \int_{\qpn}\int_{\qpn}K(y,x-y) \phi(x) N(y) \left( \frac{m_{k}(x)}{N(x)} - \frac{m_{k}(y)}{N(y)}\right)^2 d^ny d^n x\le \frac{1}{k}.$$
	
\t	Consider the collection of test functions $\mathcal F=\{m_k(x) \}$.  Since each one of these functions has compact support, we can assume that all the functions have support in a ball $T$.  
	\begin{itemize}
		\item	This collection is uniformly equicontinuous on $T$, it means that for every $\epsilon >0$, there exists a $\delta>0$ such that 
	$$\sup_{||s-q||_p<\delta} |m_k(s)- m_k(q)|\le \epsilon \quad \text{for all }m_k\in \mathcal{F}.$$
	In fact, for $\epsilon>0$ and $m_k$ we can choose $\delta_k$  as the  parameter of constancy of $m_k$, and we have that $|m_k(s)-m_k(q)|=0$ for $||s-q||<\delta_k$.  Then we put $\delta:=\inf \delta_k>0$ and the inequality holds.
	
	\item The collection $\mathcal F$ is pointwise bounded, i.e. $\sup_{m_k\in \mathcal F} |m_k(s)|<\infty.$
	\end{itemize}
Then, by using the Arzelá-Ascoli theorem, we conclude that $\mathcal F\subset C(T, \r)$ has a convergent subsequence in the supremum norm. 	After the extraction of the subsequence, we may pass to the limit $m_{\epsilon}\to \bar{m}$ and this function satisfies
	$$\int_{\qpn} \phi(x) \bar{m}(x) d^n x=0, \quad \int_{\qpn} \phi(x) N(x) \left(\frac{\bar{m}(x)}{N(x) }\right)^2 d^n x=1,$$ and $$ \int_{\qpn}\int_{\qpn}K(y,x-y) \phi(x) N(y) \left( \frac{\bar{m}(x)}{N(x)} - \frac{\bar{m}(y)}{N(y)}\right)^2 d^ny d^n x=0.$$	
	From the last line, we conclude that $\frac{\bar{m}(x)}{N(x)} = \frac{\bar{m}(y)}{N(y)}:=\nu$.  Since 
	\begin{align*}
	\int_{\qpn} \phi(x) \bar{m}(x) d^n x&=0\\
	\int_{\qpn} \phi(x) N(x)\frac{\bar{m}(x)}{N(x)} d^n x&=0\\
	\nu \int_{\qpn} \phi(x) N(x)d^n x&=0,
	\end{align*}
	then $\nu=0$, in other words $\bar{m}=0$ which contradicts the normalization and thus such an $\alpha$ should exist.
\end{proof}

\begin{remark}To this point we do not have big differences with classical case, we just adapt the techniques to $p$-adic case, but we have some observations.
	\begin{enumerate}
		\item 	Lemma \ref{Poincare-inequality} is analog to Lemma 6.2 in \cite{Transport}, but we need to assume here that functions $m(x)$ are test functions, i.e. $m(x)\in \dd$, otherwise we cannot extract the sub-sequence.
		\item In order to apply the previous lemma to function $h(t,x)$ it is convenient to normalize some functions, more precisely we need to assume that $$\int_{\qpn} N(x)d^nx=1 \quad \text{and} \quad \int_{\qpn}N(x)\phi(x)d^nx=1.$$
		With the second condition we obtain that $\int_{\qpn} \phi(x) h(t,x)d^n x=0$, in fact
		\begin{align*}
		\int_{\qpn} \phi(x) h(t,x)d^n x&= \int_{\qpn} \phi(x) [n(t,x)- \rho N(x)]d^n x\\
		&=\int_{\qpn} \phi(x) n(t,x)d^n x-\rho \int_{\qpn}\phi(x) N(x)d^nx=0.
		\end{align*}
		\item In the case $\phi(x)=1$, we can take the function $h(t,x)\in \liz(\qpn)$, the $p$-adic Lizorkin space of test functions of the second kind.  Thus we can apply the lemma without assuming the normalization condition $\int_{\qpn}N(x)d^nx =1$.
\end{enumerate}
		
\end{remark}
\begin{proposition}
	For the solutions of the system \eqref{scattering}-\eqref{relacionK-k} we have,
	\begin{itemize}
		\item[(i)] 	$\rho:=\int_{\qpn}\phi(x)n^0(x)d^nx=\int_{\qpn}\phi(x)n(t,x)d^nx.$
		\item[(ii)] $\int_{\qpn} \phi(x) |n(t,x)|d^nx \le \int_{\qpn}\phi(x) |n^0(x)|d^nx$ for all $t>0.$
		\item[(iii)] If $C^1 N(x)\le n^0(x) \le C^0 N(x)$, then $C^1 N(x)\le  n(t,x) \le C^0 N(x)$ for all $t>0.$
		\item[(iv)]	If $n(t,x)$ and $N(x)$ are test functions, there exists a constant $\alpha>0$ such that 
		\begin{equation}\label{time-decay}
		\int_{\qpn}\phi(x) N(x) \left(\frac{n(t,x)}{N(x)} -\rho\right)^2 d^n x \le e^{-\alpha t} \int_{\qpn}\phi(x) N(x) \left(\frac{n^0(x)}{N(x)} -\rho\right)^2 d^n x
		\end{equation}
	\end{itemize}
\end{proposition}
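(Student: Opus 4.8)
The plan is to handle the four items in increasing order of difficulty, reusing the machinery already assembled. Item (i) is immediate from the second identity in Lemma~\ref{Lema-GRE} with $H(u)=u$: then $H'\equiv 1$ and the bracketed integrand collapses to $\left[\frac{n(t,y)}{N(y)}-\frac{n(t,x)}{N(x)}\right]+\frac{n(t,x)}{N(x)}-\frac{n(t,y)}{N(y)}=0$, so $\frac{d}{dt}\int_{\qpn}\phi(x)n(t,x)d^nx=0$, exactly as recorded in \eqref{rho}. For item (ii) I would run Lemma~\ref{Lema-GRE} with the convex choice $H(u)=|u|$; since $N(x)>0$ we have $N(x)H(n/N)=|n(t,x)|$, and the convexity inequality \eqref{convex} forces the bracketed integrand on the right-hand side to be $\le 0$ pointwise, whence $\frac{d}{dt}\int_{\qpn}\phi(x)|n(t,x)|d^nx\le 0$; integrating in $t$ gives (ii). To cope with the non-differentiability of $|u|$ at the origin I would first work with the smooth non-decreasing convex approximants $H_\delta\nearrow|\cdot|$ with $H_\delta'$ bounded, as in the proof of the first Lemma, and pass to the limit $\delta\to 0$ by dominated convergence. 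Item (iii) follows from linearity combined with the positivity property \eqref{eq--9.3}: by \eqref{eq-6.29} the function $C^0N(x)$ is a time-independent solution of \eqref{scattering}, so $w(t,x):=C^0N(x)-n(t,x)$ is again a solution with $w(0,x)=C^0N(x)-n^0(x)\ge 0$; applying \eqref{eq--9.3} to $w$ yields $n(t,x)\le C^0N(x)$, and the same argument applied to $n(t,x)-C^1N(x)$ gives the lower bound.

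Item (iv) is the heart of the matter, and the strategy is the standard entropy / entropy-dissipation scheme. Set $h(t,x):=n(t,x)-\rho N(x)$, which is a solution of \eqref{scattering} (as verified just before Lemma~\ref{Poincare-inequality}) and satisfies $\int_{\qpn}\phi(x)h(t,x)d^nx=0$. I would apply the second identity of Lemma~\ref{Lema-GRE} to $h$ with the quadratic choice $H(u)=u^2$; writing $a=h(t,x)/N(x)$ and $b=h(t,y)/N(y)$, the bracketed integrand simplifies to $2a(b-a)+a^2-b^2=-(a-b)^2$, so that
\[
\frac{d}{dt}\int_{\qpn}\phi(x)N(x)\left(\frac{h(t,x)}{N(x)}\right)^2 d^nx=-\int_{\qpn}\int_{\qpn}K(y,x-y)\phi(x)N(y)\left(\frac{h(t,x)}{N(x)}-\frac{h(t,y)}{N(y)}\right)^2 d^ny\,d^nx.
\]
Taking $h(t,\cdot)$ as the test function $m$, the Poincaré inequality of Lemma~\ref{Poincare-inequality} bounds the right-hand dissipation below by $\alpha\int_{\qpn}\phi(x)N(x)\left(h(t,x)/N(x)\right)^2 d^nx$, which yields the differential inequality $\frac{d}{dt}\mathcal{H}(h|N)(t)\le -\alpha\,\mathcal{H}(h|N)(t)$ for the quadratic entropy $\mathcal{H}(h|N)(t):=\int_{\qpn}\phi(x)N(x)\left(h(t,x)/N(x)\right)^2 d^nx$. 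Gronwall's lemma then gives $\mathcal{H}(h|N)(t)\le e^{-\alpha t}\mathcal{H}(h|N)(0)$, which upon substituting $h/N=n/N-\rho$ is precisely \eqref{time-decay}.

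The main obstacle lies entirely in item (iv), and more precisely in legitimately invoking Lemma~\ref{Poincare-inequality}. That lemma is established only for test functions, which is exactly why the hypothesis of (iv) requires $n(t,\cdot)$ and $N$ to be test functions, so that $h(t,\cdot)=n(t,\cdot)-\rho N\in\dd$; moreover the constraint $\int_{\qpn}\phi(x)h(t,x)d^nx=0$ must hold, and this needs the normalization $\int_{\qpn}\phi(x)N(x)d^nx=1$ noted in the preceding Remark (or simply $\phi\equiv 1$). One must also check that the single constant $\alpha$ produced by Lemma~\ref{Poincare-inequality} is independent of $t$; it is, because it is uniform over the whole admissible class of test functions, so that the differential inequality holds with a $t$-independent rate and Gronwall applies with the same exponent for all times.
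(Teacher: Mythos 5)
Your proposal is correct. For items (i), (ii) and (iv) it coincides with the paper's own proof: the paper likewise takes $H_1(u)=u$, $H_2(u)=|u|$ in Lemma \ref{Lema-GRE}, and for (iv) takes $H(u)=u^2$ applied to $h(t,x)=n(t,x)-\rho N(x)$, obtains the dissipation identity with the square $\left(\frac{h(t,x)}{N(x)}-\frac{h(t,y)}{N(y)}\right)^2$, invokes Lemma \ref{Poincare-inequality} and concludes by Gronwall; your extra care (smoothing $|u|$ by $H_\delta$, checking $\int_{\qpn}\phi(x)h(t,x)d^nx=0$ via the normalization, and noting that $\alpha$ is uniform over the admissible class of test functions and hence $t$-independent) only makes explicit what the paper leaves to its preceding Remark. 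The one genuine divergence is item (iii). The paper stays inside the entropy framework and chooses $H_3(u)=(u-C^0)_+^2$ and $H_4(u)=(C^1-u)_+^2$ in Lemma \ref{Lema-GRE}: under the hypothesis these relative entropies vanish at $t=0$, are non-negative, and are non-increasing, hence vanish for all $t$, forcing the pointwise bounds. You instead use linearity plus the sign-preservation property \eqref{eq--9.3}: since $N$ solves \eqref{eq-6.29}, $C^0N(x)$ is a stationary solution of \eqref{scattering}, so $w(t,x)=C^0N(x)-n(t,x)$ is a solution with non-negative initial data and therefore stays non-negative, and symmetrically for $C^1$. Both routes are sound; yours is arguably more economical, since it recycles a property already established in the first Lemma and avoids any discussion of the regularity of $(u-C^0)_+^2$, while the paper's choice has the virtue of treating all four items by the single mechanism of picking an appropriate convex $H$. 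Note that your argument needs $C^0N-n^0$ to lie in $L^1(\qpn)$ (so that the existence--uniqueness framework and \eqref{eq--9.3} apply to $w$), i.e.\ $N\in L^1(\qpn)$; the paper's version needs the corresponding entropy integrals finite, so neither route escapes that mild implicit assumption.
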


\begin{proof}
	For (i) and (ii) we just choose the convex functions $H_1(u)=u$ and $H_2(u)=|u|$ respectively, in Lemma \ref{Lema-GRE}. For (iii) we choose $H_3(u)=(u-C^0)_+^2$ for the upper bound, and $H_4(u)=(C_1-u)_+^2$ for the lower bound. In order to prove the exponential time decay \eqref{time-decay} we choose the convex function  $H(u)=u^2$ in the second part of Lemma \eqref{Lema-GRE}, therefore 
	\begin{align*}
	\frac{d}{dt} \int_{\qpn}& \phi(x) N(x) \left(\frac{h(t,x)}{N(x)}\right)^2 d^n x \\
	&= \int_{\qpn}\int_{\qpn} K(y, x-y) \phi(x)N(y) \left[2\left(\frac{h(t,x)}{N(x)}\right) \left[\frac{h(t,y)}{N(y)} - \frac{h(t,x)}{N(x)}\right]\right.\\
	&\hspace{2cm}\left. + \left(\frac{h(t,x)}{N(x)}\right)^2 - \left(\frac{h(t,y)}{N(y)}\right)^2\right]d^ny d^n x\\
	&=- \int_{\qpn}\int_{\qpn} K(y, x-y) \phi(x) N(y) \left(  \frac{h(t,x)}{N(x)} - \frac{h(t,y)}{N(y)} \right)^2 d^ny d^n x\le 0,
	\end{align*}
	then, by using Lemma \ref{Poincare-inequality} we conclude that there exists $\alpha>0$ such that
	\begin{align*}
	\frac{d}{dt} \int_{\qpn} \phi(x) N(x) \left(\frac{h(t,x)}{N(x)}\right)^2 d^n x 
	&\le -\alpha  \int_{\qpn} \phi(x) N(x) \left(\frac{h(t,x)}{N(x) }\right)^2 d^n x
	\end{align*}
	
\t	By using the Gronwall's Lemma we conclude that
	$$\int_{\qpn} \phi(x) N(x) \left(\frac{h(t,x)}{N(x)}\right)^2 d^n x \le e^{-\alpha t} \int_{\qpn} \phi(x) N(x) \left(\frac{h(0,x)}{N(x)}\right)^2 d^n x  $$
	or
	$$\int_{\qpn} \phi(x) N(x) \left(\frac{n(t,x)}{N(x)} - \rho \right)^2 d^n x \le e^{-\alpha t} \int_{\qpn} \phi(x) N(x) \left(\frac{n^0(x)}{N(x)} - \rho \right)^2 d^n x.$$	
\end{proof}
\t The last part of the previous proposition says that the solutions to the system converge as $t\to \infty$ with an exponential rate, i.e. 
\begin{equation}
n(t,x) \to \rho N(x) \quad \text{ as }t\to \infty.
\end{equation}

\section{Time dependent coefficients}
The above manipulations are also valid for time dependent coefficients.  More precisely, when we consider the problem
\begin{equation}\label{scattering-t}
\frac{\d}{\d t}n(t,x) + k(t,x)n(t, x) = \int_{\qpn} K(t,y, x-y)n(t,y) d^n y, \qquad t\ge 0, x\in \qpn,
\end{equation}
with the initial data $n^0\in L^1(\qpn)$, and  $K(t,y,z)\ge 0,$  $k(t,y):=\int_{\qpn} K(t,y,z)d^nz \in L^{\infty}(\qpn)$ and the dual equation
\begin{equation}\label{dual-t}
-\frac{\partial}{\partial t} \phi(t,x) + k(t,x)\phi(t,x) = \int_{\qpn}K(t,x, y-x) \phi(t,y) d^n y, \qquad t\ge 0, x\in \qpn.
\end{equation}
we have the following entropy inequality.

\begin{lemma}
	Let $N(t,x)$ and $n(t,x)$ be  solutions to the primal equation \eqref{scattering-t}, and let $\phi(t,x)$ be a solution to the dual equation \eqref{dual-t}.  For any convex function $H:\r \to \r$ we have

	\begin{align*}
	\frac{d}{dt}\int_{\qpn}&\phi(t,x)N(t,x) H\left(\frac{n(t,x)}{N(t,x)}\right) d^n x \\
	&=\int_{\qpn}\int_{\qpn}K(y,x-y)\phi(t,x) N(t,y) \left[ H'\left(\frac{n(t,x)}{N(t,x)}\right) \left[\frac{n(t,y)}{N(t,y)} -\frac{n(t,x)}{N(t,x)}\right]\right.\\
	&\hspace{4cm}\left.+ H\left(\frac{n(t,x)}{N(t,x)}\right)- H\left(\frac{n(t,y)}{N(t,y)}\right)  \right]d^ny d^nx\\
	&\le 0.
	\end{align*}
\end{lemma}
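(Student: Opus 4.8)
The plan is to follow the proof of Lemma \ref{Lema-GRE} almost verbatim, the only genuinely new feature being that $N$ and $\phi$ now depend on $t$, so the product rule generates extra terms that must be shown to cancel. Writing $u(t,x):=n(t,x)/N(t,x)$ and suppressing the arguments $(t,x)$, I would start from
\begin{align*}
\frac{\d}{\d t}\bigl[\phi\,N\,H(u)\bigr]
&= \phi_t\,N\,H(u) + \phi\,N_t\,H(u) + \phi\,N\,H'(u)\,u_t,
\end{align*}
and use $u_t = n_t/N - u\,N_t/N$ to rewrite the last summand as $\phi\,H'(u)\,n_t - \phi\,H'(u)\,u\,N_t$.

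Next I would substitute the three evolution equations. Since both $n$ and $N$ solve the primal equation \eqref{scattering-t}, we have $n_t = -k\,n + \int_{\qpn} K\,n(t,y)\,d^n y$ and $N_t = -k\,N + \int_{\qpn} K\,N(t,y)\,d^n y$, while the dual equation \eqref{dual-t} gives $\phi_t = k\,\phi - \int_{\qpn} K(t,x,y-x)\,\phi(t,y)\,d^n y$. The first key observation is a pair of cancellations of the $k$-terms, exactly as in the steady-state case: $\phi_t\,N\,H(u)$ and $\phi\,N_t\,H(u)$ contribute $+k\,\phi\,N\,H(u)$ and $-k\,\phi\,N\,H(u)$ respectively, which cancel; and $\phi\,H'(u)\,n_t$ and $-\phi\,H'(u)\,u\,N_t$ contribute $-k\,n\,\phi\,H'(u)$ and $+k\,n\,\phi\,H'(u)$ (using $uN=n$), which also cancel.

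After these cancellations the surviving $H'$-terms collapse directly into $\phi(t,x)\,H'(u(t,x))\int_{\qpn} K(t,y,x-y)\,N(t,y)\,[u(t,y)-u(t,x)]\,d^n y$, which is precisely the first bracket of the claimed identity. Of the two surviving $H$-terms, one, namely $\phi(t,x)\,H(u(t,x))\int_{\qpn} K(t,y,x-y)\,N(t,y)\,d^n y$, already matches the $H(u(t,x))$ part pointwise; the other, $-N(t,x)\,H(u(t,x))\int_{\qpn} K(t,x,y-x)\,\phi(t,y)\,d^n y$, does not. This mismatch is where I expect the main obstacle to lie, and I would dispatch it exactly as the vanishing double integral at the end of the proof of Lemma \ref{Lema-GRE}: integrate in $x$, apply Fubini, and relabel $x\leftrightarrow y$, which converts the offending term into $-\phi(t,x)\int_{\qpn} K(t,y,x-y)\,N(t,y)\,H(u(t,y))\,d^n y$ and thereby reconstructs the $H(u(t,x))-H(u(t,y))$ structure. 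Because this identification is valid only after integration in $x$, one obtains the integrated identity for $\frac{d}{dt}\int_{\qpn}\phi\,N\,H(n/N)\,d^n x$ and not a pointwise one.

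Finally, to fix the sign I would invoke convexity through \eqref{convex}: pointwise $H'(u(t,x))[u(t,y)-u(t,x)] + H(u(t,x)) - H(u(t,y)) \le 0$, and since $K,\phi,N\ge 0$ the double integrand is nonpositive, so $\frac{d}{dt}\int_{\qpn}\phi\,N\,H(n/N)\,d^n x \le 0$. The only bookkeeping beyond the time-independent argument is tracking the $\phi_t$ and $N_t$ contributions and verifying the two cancellations above; once those are in place the remainder is identical to Lemma \ref{Lema-GRE} and Theorem \ref{entropy-ineq}.
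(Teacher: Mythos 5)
Your proof is correct and is exactly the argument the paper intends: the paper states this lemma \emph{without proof}, saying only that ``the above manipulations are also valid for time dependent coefficients,'' and your computation is precisely those manipulations carried out in full, with the genuinely new content --- the $\phi_t$ and $N_t$ contributions from the product rule and their two cancellations against the $k$-terms --- handled correctly. Your further observation that the leftover dual-kernel term $-N(t,x)\,H\!\left(\frac{n(t,x)}{N(t,x)}\right)\int_{\qpn}K(t,x,y-x)\,\phi(t,y)\,d^ny$ matches the claimed integrand only after integration in $x$, Fubini, and the relabeling $x\leftrightarrow y$ is also right, and is in fact more careful than the paper's own treatment of the time-independent case (Lemma \ref{Lema-GRE}), where the corresponding step is asserted as a pointwise identity.
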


\section{Hyperbolic Rescaling}
We assume that scattering occurs with small changes and a fast rate (in other words, we change the time scale according to the size of jumps), then we have the following problem
\begin{equation}\label{hyperbolic}
\begin{cases}
\frac{\partial}{\partial t} \nep(t,x) + \frac{1}{|\varepsilon|_p}\left[ k(x)\nep(t,x)- \int_{\qpn}\frac{1}{|\varepsilon|_p^n} K(y, \frac{x-y}{|\varepsilon|_p})  \nep (t,y) d^n y\right]=0,\\
\nep (0, x)=n^0(x), \qquad \int_{\qpn}|n^0(x)|d^nx=: M^0.
\end{cases}
\end{equation}

\t Consider a collection of test functions $\mathcal F=\{\nep(t,x) \}$ which are solutions of \eqref{hyperbolic}, then we can extract a sub-sequence that converges, in some sense, to a solution $n$ of \eqref{scattering}.

To do that, suppose that we have a function $\bar N\in L^1(\qpn)\cap L^{\infty}(\qpn)$ such that for the initial data and the steady states $N_{\varepsilon}$ undergo uniform control
$$|n^0|\le C_0 N_{\varepsilon}\le \bar N.$$
This implies the same control for all $t\ge 0$
$$|n(t,x)|\le C_0 N_{\varepsilon}\le \bar N.$$

\t By using the same argue as in Lemma \ref{Poincare-inequality}, we extract a convergent  sub-sequence in the supremum norm i.e. $\sup_{x\in \qpn}|\nep(t,x)- n(t,x)|\to 0$ for all $t\ge 0$.   Then for $\phi(t,x)\in L^1(\r^+\times \qpn)$
\begin{align*}
\int_{0}^{\infty} \int_{\qpn} \phi(t,x) &(\nep(t,x)- n(t,x)) d^nx dt 
\le  \int_{0}^{\infty} \int_{\qpn} \phi(t,x) |\nep(t,x)- n(t,x)| d^nx dt \\
&\le \sup_{x\in \qpn}|\nep(t,x)- n(t,x)| \int_{0}^{\infty} \int_{\qpn} |\phi(t,x)| d^nx dt,
\end{align*}

\t in other words
\begin{align*}
\int_{0}^{\infty} \int_{\qpn} \phi(t,x) \nep(t,x) d^nx dt \xrightarrow[\varepsilon\to 0]{} \int_{0}^{\infty} \int_{\qpn} \phi(t,x) n(t,x) d^nx dt,
\end{align*}
which means that 
\begin{equation*}
\nep \rightharpoonup n \text{  in  } L^{\infty}(\r^+\times \qpn)-w^*, \quad |n|\le \bar N.
\end{equation*}

\t We will show now that, under some regularity assumptions for $K$, we can derive stronger  bounds than $L^1$  for solutions of \eqref{hyperbolic}.

\begin{proposition}
	Consider $\nep$ a solution  of \eqref{hyperbolic}, and assume \eqref{relacionK-k} and
	\begin{equation}\label{regularity-K}
	\int_{\qpn}\left[ K(x-|\varepsilon|_p z, z) - K(x,z) \right] dz \le |\varepsilon|_p L_1,
	\end{equation}
	therefore 	for all $t\ge 0$
	\begin{equation*}
	 ||\nep (t,x) ||_{L^2(\qpn)} \le  e^{L_1 t}||n^0(x)||_{L^2(\qpn)}.
	\end{equation*}
\end{proposition}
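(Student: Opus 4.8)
The plan is to run a standard $L^2$ energy estimate and close it with Gronwall's lemma, using the regularity hypothesis \eqref{regularity-K} to absorb the only dangerous term. First I would multiply \eqref{hyperbolic} by $\nep(t,x)$ and integrate in $x$, justifying the differentiation under the integral sign and the use of Fubini by Lebesgue dominated convergence exactly as in the earlier lemmas (here $K\in L^1\cap L^\infty$ and $k\in L^\infty$ make every integral finite). This gives
$$\frac{1}{2}\frac{d}{dt}\|\nep(t,\cdot)\|_{\Ldos}^2 = -\frac{1}{|\varepsilon|_p}\int_{\qpn} k(x)\nep(t,x)^2\, d^nx + \frac{1}{|\varepsilon|_p}J,$$
where $J:=\int_{\qpn}\int_{\qpn}\frac{1}{|\varepsilon|_p^n}K(y,\tfrac{x-y}{|\varepsilon|_p})\nep(t,y)\nep(t,x)\,d^ny\,d^nx$. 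The whole point is that both terms on the right are singular of order $1/|\varepsilon|_p$ and must nearly cancel; the leftover is precisely what \eqref{regularity-K} controls.

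Second, in $J$ I would perform, for fixed $x$, the change of variable $z=\tfrac{x-y}{|\varepsilon|_p}$, i.e. $y=x-|\varepsilon|_p z$, under which the Haar measure scales as $d^ny=|\varepsilon|_p^n\,d^nz$ — the same normalization that makes the rescaled operator mass preserving and that cancels the $|\varepsilon|_p^{-n}$ prefactor. This yields $J=\int_{\qpn}\int_{\qpn} K(x-|\varepsilon|_p z,z)\,\nep(t,x-|\varepsilon|_p z)\,\nep(t,x)\,d^nz\,d^nx$. Since $K\ge 0$, applying Young's inequality $ab\le\frac12(a^2+b^2)$ to the factor $\nep(t,x-|\varepsilon|_p z)\nep(t,x)$ splits $J\le\frac12(J_1+J_2)$. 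In $J_1$, carrying $\nep(t,x-|\varepsilon|_p z)^2$, the further translation $w=x-|\varepsilon|_p z$ (measure invariant) together with $k(w)=\int_{\qpn}K(w,z)\,d^nz$ from \eqref{relacionK-k} turns it into $\int_{\qpn}\nep(t,w)^2 k(w)\,d^nw$. In $J_2$, carrying $\nep(t,x)^2$, I would keep $x$ fixed and integrate in $z$ first, producing the weight $\int_{\qpn}K(x-|\varepsilon|_p z,z)\,d^nz$.

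Third comes the key step: bound that weight by
$$\int_{\qpn}K(x-|\varepsilon|_p z,z)\,d^nz = k(x)+\int_{\qpn}\big[K(x-|\varepsilon|_p z,z)-K(x,z)\big]\,d^nz\le k(x)+|\varepsilon|_p L_1,$$
using \eqref{regularity-K}. Collecting the pieces gives $J\le\int_{\qpn} k\,\nep^2\,d^nx+\frac{|\varepsilon|_p L_1}{2}\|\nep(t,\cdot)\|_{\Ldos}^2$, so that after multiplying by $1/|\varepsilon|_p$ the two singular $\int_{\qpn} k\,\nep^2$ terms cancel and one is left with $\frac{d}{dt}\|\nep(t,\cdot)\|_{\Ldos}^2\le L_1\|\nep(t,\cdot)\|_{\Ldos}^2$. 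Gronwall's lemma then yields $\|\nep(t,\cdot)\|_{\Ldos}^2\le e^{L_1 t}\|n^0\|_{\Ldos}^2$, and taking square roots gives the stated bound (in fact with the stronger exponent $e^{L_1 t/2}$).

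I expect the main obstacle to be the careful bookkeeping of the two changes of variables in the $p$-adic setting — in particular verifying that scaling by $|\varepsilon|_p$ contributes the Jacobian $|\varepsilon|_p^n$ (the very normalization that renders the rescaled operator conservative), and checking that the translation used for $J_1$ versus leaving $x$ fixed in $J_2$ are exactly the choices that produce, respectively, the clean $\int_{\qpn}k\,\nep^2$ cancellation and the shifted weight $K(x-|\varepsilon|_p z,z)$ to which \eqref{regularity-K} applies. The analytic justifications (differentiating under the integral, Fubini, finiteness) are routine but I would state them explicitly.
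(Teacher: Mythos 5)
Your proposal is correct and follows essentially the same route as the paper's own proof: an $L^2$ energy estimate, the change of variables $z=(x-y)/|\varepsilon|_p$ with Jacobian $|\varepsilon|_p^n$, Young's inequality on the product $\nep(t,x-|\varepsilon|_p z)\nep(t,x)$, translation invariance to turn the shifted square term into $\int_{\qpn}k\,\nep^2\,d^nx$, the hypothesis \eqref{regularity-K} to absorb the remaining singular piece, and Gronwall's lemma. The only differences are cosmetic — you split the weight in $J_2$ as $k(x)+\int_{\qpn}\bigl[K(x-|\varepsilon|_p z,z)-K(x,z)\bigr]d^nz$ before invoking \eqref{regularity-K}, whereas the paper cancels the two $\int k\,\nep^2$ terms first and then groups the difference quotient — and you correctly observe that the argument in fact yields the slightly sharper exponent $e^{L_1 t/2}$.
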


\begin{proof}
We multiply \eqref{hyperbolic} by $\nep(t,x)$	
{\small\begin{align*}
\nep(t,x)\frac{\partial}{\partial t} \nep(t,x) + \frac{1}{|\varepsilon|_p}\left[ k(x)\nep(t,x)^2- \int_{\qpn}\frac{1}{|\varepsilon|_p^n} K(y, \frac{x-y}{|\varepsilon|_p})  \nep (t,y)\nep(t,x) d^n y\right]=0,
\end{align*}}
and integrate with respect to $x$
\begin{align*}
\int_{\qpn} \frac12 \frac{d}{d t} \nep(t,x)^2 d^nx  &+ \int_{\qpn} \frac{1}{|\varepsilon|_p} k(x)\nep(t,x)^2 d^n x\\
&=\int_{\qpn} \int_{\qpn}\frac{1}{|\varepsilon|_p^{n+1}} K(y, \frac{x-y}{|\varepsilon|_p})  \nep (t,y)\nep(t,x) d^n y d^n x.
\end{align*}
After the change of variables $z=\frac{x-y}{|\varepsilon|_p}$ we have
\begin{align*}
 \frac12 \frac{d}{d t} \int_{\qpn}& \nep(t,x)^2 d^nx  + \int_{\qpn}\int_{\qpn} \frac{1}{|\varepsilon|_p} K(x,z)\nep(t,x)^2 d^nz d^n x\\
 &= \int_{\qpn} \int_{\qpn}\frac{1}{|\varepsilon|_p^{n+1}} K(x-|\varepsilon|_p z, z)  \nep (t,x-|\varepsilon|_p z)\nep(t,x)|\varepsilon|_p^n d^n z d^n x\\
&\le \frac12 \int_{\qpn} \int_{\qpn}\frac{1}{|\varepsilon|_p} K(x-|\varepsilon|_p z, z)  \left( \nep (t,x-|\varepsilon|_p z)^2 + \nep(t,x)^2 \right) d^n z d^n x. 
\end{align*}
Since $\int_{\qpn} K(x-|\varepsilon|_p z, z)   \nep (t,x-|\varepsilon|_p z)^2  d^nx =\int_{\qpn}  K(x,z)\nep(t,x)^2 d^n x$ it holds

\begin{align*}
\frac12 \frac{d}{d t} \int_{\qpn} \nep(t,x)^2 d^nx &\le \frac12 \int_{\qpn}\int_{\qpn} \frac{K(x-|\varepsilon|_p z,z) -K(x,z)}{|\varepsilon|_p} \nep (t,x)^2 d^nz d^n x\\
\frac{d}{d t} \int_{\qpn} \nep(t,x)^2 d^nx &\le  \int_{\qpn} \nep (t,x)^2  \int_{\qpn} \frac{K(x-|\varepsilon|_p z,z) -K(x,z)}{|\varepsilon|_p} d^nz d^n x\\
&\le L_1 \int_{\qpn} \nep (t,x)^2 d^nx. 
\end{align*}
By Gronwall's lemma we can conclude that 
\begin{equation*}
\int_{\qpn} \nep (t,x)^2 d^nx \le e^{L_1 t}\int_{\qpn} \nep^0 (x)^2 d^nx,
\end{equation*}
in other words
\begin{equation*}
||\nep ||_{L^2(\qpn)} \le e^{L_1 t} ||\nep^0||_{L^2(\qpn)} = e^{L_1 t} ||n^0||_{L^2(\qpn)}.
\end{equation*}
\end{proof}

\section{Stability}
\begin{theorem}[Analog to Smoller, Theorem 16.1 (c), p. 266]
The solution $n(t,x)$ is stable in the following sense:  If $n_0, v_0\in L^1(\qpn)$ and $v$ is the corresponding constructed solution of \eqref{scattering} with initial data $v_0$, then 
\begin{equation}\label{stability}
\int_{\qpn} |n(t,x)-v(t,x)|d^nx \le \int_{\qpn} |n^0(x)-v^0(x)|d^nx.
\end{equation} 
\end{theorem}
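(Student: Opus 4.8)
The plan is to exploit linearity of the scattering equation together with the $L^1$-contraction estimate \eqref{eq--9.5} already established in the first Lemma. The key observation is that the difference $w(t,x) := n(t,x) - v(t,x)$ of two solutions is itself a solution of the scattering equation, since \eqref{scattering} is linear in $n$ and the coefficients $k, K$ do not depend on the solution. Concretely, subtracting the equation satisfied by $v$ from the one satisfied by $n$ yields
\begin{equation*}
\frac{\partial}{\partial t} w(t,x) + k(x) w(t,x) = \int_{\qpn} K(y, x-y) w(t,y)\, d^n y,
\end{equation*}
with initial data $w(0,x) = n^0(x) - v^0(x) \in L^1(\qpn)$.

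Having reduced to a single solution, I would invoke the estimate \eqref{eq--9.5} from the first Lemma directly applied to $w$. That estimate states that for any solution of \eqref{scattering} with initial data in $L^1(\qpn)$, the $L^1$-norm is non-increasing in time:
\begin{equation*}
\int_{\qpn} |w(t,x)|\, d^n x \le \int_{\qpn} |w(0,x)|\, d^n x.
\end{equation*}
Substituting $w = n - v$ gives precisely \eqref{stability}. The machinery behind \eqref{eq--9.5} is the entropy argument with the smooth convex approximations $H_\delta$ to $\mathrm{sgn}_+$, which after integration yields $\frac{d}{dt}\int n_+ \le 0$ and hence the $L^1$-contraction; all of this applies verbatim to $w$ since $w$ is a genuine $L^1$-solution.

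The step requiring the most care is justifying that $w$ is indeed the solution produced by the construction of Theorem \ref{teo-Brezis} for initial data $n^0 - v^0$, so that the conclusions of the first Lemma (in particular \eqref{eq--9.5}) legitimately apply to it. This follows from the uniqueness clause in Theorem \ref{teo-Brezis}: since $F$ in \eqref{operatorF} is linear and Lipschitz, the constructed solution map $u_0 \mapsto u$ is linear, so the solution with data $n^0 - v^0$ coincides with the difference of the solutions with data $n^0$ and $v^0$. Thus $w = n - v$ is exactly the constructed solution, and the $L^1$-decay property transfers without further assumptions. No genuine obstacle arises here; the entire stability statement is a consequence of linearity plus the contraction already proved.
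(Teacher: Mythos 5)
Your proof is correct, and it shares the paper's key first step --- observing that by linearity of \eqref{scattering} the difference $w(t,x)=n(t,x)-v(t,x)$ is again a solution, with initial data $n^0-v^0\in L^1(\qpn)$ --- but the contraction estimate you then invoke is different from the paper's. The paper applies Theorem \ref{entropy-ineq} (the relative entropy inequality with $\phi=1$) to $w$ with the convex function $H(u)=|u|$, obtaining
\begin{equation*}
\frac{d}{dt}\int_{\qpn} N(x)\left|\frac{n(t,x)-v(t,x)}{N(x)}\right| d^nx \le 0,
\end{equation*}
i.e.\ that $G(t)=\int_{\qpn}|n(t,x)-v(t,x)|\,d^nx$ is decreasing, whence $G(t)\le G(0)$. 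You instead quote the $L^1$-contraction \eqref{eq--9.5} from the first Lemma, proved there via the smooth approximations $H_\delta \nearrow \mathrm{sgn}_+$. The two routes buy different things: yours is more elementary and, notably, does not require the existence of a positive steady state $N(x)>0$ solving \eqref{eq-6.29}, which is a standing (and nontrivial) hypothesis behind Theorem \ref{entropy-ineq}; the paper's route keeps the argument inside the relative entropy framework, exhibiting stability (and hence uniqueness) as one more instance of the general entropy principle. Your additional care in verifying that $w$ really is the constructed solution for the data $n^0-v^0$ --- via linearity of the solution map and the uniqueness clause of Theorem \ref{teo-Brezis} --- addresses a point the paper passes over silently with ``it is clear that,'' and this verification is in fact needed in both arguments before either contraction estimate may be applied to $w$.
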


\begin{proof}
	It is clear that if $n(t,x)$ and $v(t,x)$ are solutions of \eqref{scattering}, then $n(t,x)-v(t,x)$ is also a solution.  In addition, if we take $H(u)=|u|$ in Theorem \eqref{entropy-ineq} and we apply it to $n(t,x)-v(t,x)$ we have that
	$$\frac{d}{dt}\int_{\qpn} N(x)\left| \frac{n(t,x)-v(t,x)}{N(x)}\right| d^nx \le 0,$$
	in other words the function $G(t)=\int_{\qpn} \left| n(t,x)-v(t,x)\right| d^nx$ is decreasing, thus for $t>0$ we have $G(t)\le G(0)$, or 
	\begin{equation}
	\int_{\qpn} |n(t,x)-v(t,x)|d^nx \le \int_{\qpn} |n^0(x)-v^0(x)|d^nx.
	\end{equation} 
\end{proof}

Observe that the entropy inequality lead us to uniqueness.

\end{document}